\numberwithin{equation}{section}
\newtheorem{lemma}{Lemma}[section]
\newtheorem{theorem}[lemma]{Theorem}
\newtheorem{cor}[lemma]{Corollary}
\newtheorem{rem}[lemma]{Remark}
\newcommand{\re}{\begin{rem}\rm}
  \newcommand{\mar}{\end{rem}}
\renewcommand{\for}{\begin{eqnarray*}}
\newcommand{\mel}{\end{eqnarray*}}
\newcommand{\nz}{{\mathbb N}}
\newcommand{\zz}{{\mathbb Z}}
\newcommand{\cz}{{\mathbb C}}
\newcommand{\ten}{\otimes}
\newcommand{\qd}{\end{proof}\vspace{0.5ex}}
\newcommand{\om}{\omega}
\newcommand{\al}{\alpha}
\newcommand{\si}{\sigma}
\newcommand{\Si}{\Sigma}
\newcommand{\eps}{\varepsilon}
\newcommand{\F}{{\mathcal F}}
\newcommand{\U}{{\mathcal U}}
\newcommand{\pf}{\begin{proof}}
\newcommand{\xspace}{\hbox{\kern-2.5pt}}
\newcommand{\xyspace}{\hbox{\kern-1.1pt}}
\newcommand{\lan}{\langle}
\newcommand{\ran}{\rangle}
\definecolor{LightGray}{rgb}{0.94,0.94,0.94}
\definecolor{VeryLightBlue}{rgb}{0.9,0.9,1}
\definecolor{LightBlue}{rgb}{0.8,0.8,1}
\definecolor{DarkBlue}{rgb}{0,0,0.6}
\definecolor{LightGreen}{rgb}{0.88,1,0.88}
\definecolor{MidGreen}{rgb}{0.6,1,0.6}
\definecolor{DarkGreen}{rgb}{0,0.6,0}
\definecolor{DarkGrreen}{rgb}{0,0.8,0}
\definecolor{VeryLightYellow}{rgb}{1,1,0.9}
\definecolor{LightYellow}{rgb}{1,1,0.6}
\definecolor{MidYellow}{rgb}{1,1,0.5}
\definecolor{DarkYellow}{rgb}{0.8,1,0.3}
\definecolor{VeryLightRed}{rgb}{1,0.9,0.9}
\definecolor{LightRed}{rgb}{1,0.8,0.8}
\definecolor{DarkRed}{rgb}{0.8,0.2,0}
\definecolor{DarkRedb}{rgb}{0.6,0.2,0}
\definecolor{DarkLila}{rgb}{0.8,0,1}
\definecolor{Beige}{rgb}{0.96,0.96,0.86}
\definecolor{Gold}{rgb}{1.,0.84,0.}
\definecolor{Goldb}{rgb}{0.7,0.3,0.5}
\definecolor{MyYellow}{rgb}{1.,0.84,0.8}
\begin{document}

\title[]{Generalized $q$-gaussian von Neumann algebras with coefficients, II. Absence of central sequences.}

\author[Marius Junge]{Marius Junge}
\address{Department of Mathematics\\
University of Illinois, Urbana, IL 61801, USA} 
\email[Marius Junge]{junge@math.uiuc.edu}

\author[Bogdan Udrea]{Bogdan Udrea}
\address{Department of Mathematics\\
University of Iowa, Iowa City, IA 52242, USA} 
\email[Bogdan Udrea]{bogdanteodor-udrea@uiowa.edu}

\begin{abstract} We show that the generalized $q$-gaussian von Neumann algebras with coefficients 
$\Gamma_q(B,S\ten H)$ with $B$ a finite dimensional factor, dim$(D_k(S))$ sub-exponential and the dimension of $H$ finite and larger than a constant depending on $q$, have no non-trivial central sequences.
\end{abstract}

\maketitle
\section{Introduction.}In this short note, which is a sequel to \cite{JungeUdreaGQC}, we investigate the lack of non-trivial central sequences in the generalized $q$-gaussian von Neumann algebras with coefficients introduced in \cite{JungeUdreaGQC}. Specifically, we prove that the von Neumann algebras $M=\Gamma_q(B,S\ten H)$ are factors without the property $\Gamma$ of Murray and von Neumann when $B$ is a finite dimensional factor, the dimensions (over $\cz$) of the spaces $D_k(S)$ (see Def. 3.18 in \cite{JungeUdreaGQC}) are sub-exponential and the dimension of $H$ is finite and larger than a constant depending on $q$. A type $II_1$ factor $(M,\tau)$ has property $\Gamma$, according to Murray and von Neumann, if there exists a sequence $(u_n)$ of unitaries in $M$ such that $\|xu_n-u_nx\|_2 \to 0$ for all $x \in M$ and $\tau(u_n)=0$ for all $n$ (see \cite{MvN}). Murray and von Neumann used this property to distinguish between the hyperfinite factor $R$ and $L(\mathbb{F}_2)$. Central sequences in type $II_1$ factors were further studied by Dixmier (\cite{DixmierQPS}) and Lance (\cite{DixmierLanceDNF}). In the 70's, property $\Gamma$ played an important role in the work of McDuff (\cite{McDuff}) and Connes (\cite{Connes76}) regarding the classification of injective factors. The absence of central sequences in the context of $q$-gaussian von Neumann algebras was investigated by Sniady (\cite{SniadyII}, see also \cite{Krolak1, Krolak2, Ricard} for the factoriality of these algebras).
\par In \cite{JungeUdreaGQC} we introduced a new class of von Neumann algebras, the so-called generalized $q$-gaussian von Neumann algebras with coefficients $\Gamma_q(B,S\ten H)$ associated to a sequence of symmetric independent copies $(\pi_j,B,A,D)$, and we proved that under certain assumptions they display a powerful structural property, namely strong solidity relative to $B$. We continue our investigation of the generalized $q$-gaussians by proving that, under the same assumptions, they do not possess the property $\Gamma$ if $B$ is finite dimensional and the dimension of $H$ is finite and exceeds a constant depending on $q$.

\section{The main theorem.}
Throughout this section we use the notations and results from Section 3 of \cite{JungeUdreaGQC}.
\begin{theorem} Let $(\pi_j,B,A,D)$ a sequence of symmetric independent copies with $B$ amenable, $1 \in S=S^* \subset A$ and assume that there exist constants $C,d>0$ such that $dim_B(D_k(S))\leq Cd^k$, for all $k \geq 0$ . Let $H$ be a Hilbert space with $2 \leq dim(H) < \infty$ and $M=\Gamma_q(B,S\ten H)$. Assume that $M$ is a factor. For $k \geq 0$, denote by $P_{\leq k}$ the orthogonal projection of $L^2(M)$ onto $\bigoplus_{s \leq k} L^2_s(M)$. Let $(x_n) \in M' \cap M^{\om}$. Then for every $\delta >0$, there exists a $k \geq 0$ such that
\[\lim_{n \to \om} \|x_n-P_{\leq k}(x_n)\|_2 \leq \delta.\]
If moreover $B$ is finite dimensional, then $M'\cap M^{\om}=\cz$, i.e. $M$ does not have the property $\Gamma$.
\end{theorem}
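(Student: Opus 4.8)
The plan is to argue by contradiction, using the first conclusion of the theorem (the concentration estimate, which I take as given) to confine a hypothetical nontrivial central sequence to a \emph{finite dimensional} subspace of $L^2(M)$, and then to produce a contradiction from a degree-uniform spectral gap for the commutators with the generators. Suppose $M'\cap M^{\om}\neq\cz$. Then we may choose a self-adjoint $(x_n)\in M'\cap M^{\om}$ with $\tau(x_n)=0$ and $\lim_{n\to\om}\|x_n\|_2=1$, represented by a sequence bounded in operator norm, say $\|x_n\|\leq C$. Since $B$ is a finite dimensional factor and $(x_n)$ commutes with $B\subset M$, the degree-zero component $P_{\leq 0}(x_n)=E_B(x_n)$ lies in $B'\cap B=\cz$, whence $E_B(x_n)=\tau(x_n)=0$, so the sequence is concentrated in degrees $\geq 1$. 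Moreover, because $\dim B<\infty$, $\dim H<\infty$ and the spaces $D_s(S)$ are finite dimensional over $B$, each homogeneous chaos $L^2_s(M)$ (spanned by Wick words of length $s$ with coefficients in $B$ and letters in the finite dimensional single-particle space $S\ten H$) is finite dimensional over $\cz$; hence $V_k:=\bigoplus_{1\leq s\leq k}L^2_s(M)$ is finite dimensional for every $k$.

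Fix $\delta>0$, to be specified below. By the first part of the theorem there is a $k\geq 0$ with $\lim_{n\to\om}\|x_n-P_{\leq k}(x_n)\|_2\leq\delta$. Because $V_k$ is finite dimensional and $\|P_{\leq k}(x_n)\|_2\leq\|x_n\|\leq C$, the norm equivalence $\|\cdot\|_\infty\leq C_k\|\cdot\|_2$ on $V_k$ shows that $(P_{\leq k}(x_n))_n$ is a bounded sequence in $V_k\subset M$; along $\om$ it converges in $\|\cdot\|_2$, hence in every norm on the finite dimensional $V_k$, to a single element $y\in V_k$. Thus $(P_{\leq k}(x_n))=y$ in $M^{\om}$, $E_B(y)=0$, $\|y\|_2\geq 1-\delta$, and for every unitary $u\in M$, writing $uy-yu=u(y-x_n)-(y-x_n)u+(ux_n-x_nu)$ and passing to the limit,
\[\|uy-yu\|_2\leq 2\lim_{n\to\om}\|y-x_n\|_2+\lim_{n\to\om}\|ux_n-x_nu\|_2\leq 2\delta.\]
So $y$ is an honest element of $M$ of bounded degree that commutes with all of $M$ up to $2\delta$.

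The crux is to rule out such a $y$ by a \emph{degree-uniform} spectral gap. Writing $G(\xi)$ ($\xi\in S\ten H$) for the symmetric generators and $(\xi_j)_{j=1}^J$ for the (finitely many) generators attached to an orthonormal basis of $S\ten H$, I would establish a Poincar\'e-type inequality
\[\sum_{j=1}^J\|G(\xi_j)\zeta-\zeta G(\xi_j)\|_2^2\geq c\,\|\zeta\|_2^2\]
valid for all $\zeta\in L^2(M)$ with $E_B(\zeta)=0$, with a constant $c=c(q,\dim H)>0$ that is \emph{independent of the degree}. The mechanism is that the degree-$(s+1)$ part of the commutator $[G(\xi_j),\zeta]$ on a homogeneous $\zeta$ of degree $s$ equals $(a^*(\xi_j)-r^*(\xi_j))\zeta$, the difference of the left and right $q$-creation operators; summed over $j$ this difference is bounded below on each chaos, and this is precisely where the hypothesis $\dim H>\mathrm{const}(q)$ enters, guaranteeing enough independent creation directions that no nonzero $\zeta$ can be an approximate common eigenvector. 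Granting the gap and the bound $\|G(\xi_j)\|\leq C_q$, the almost-commutation above gives $c\,\|y\|_2^2\leq\sum_j\|G(\xi_j)y-yG(\xi_j)\|_2^2\leq J\,(2C_q\delta)^2$, so $c(1-\delta)^2\leq 4JC_q^2\delta^2$; since $c$, $C_q$, $J$ are fixed and, crucially, $c$ does not degrade with $k$, choosing $\delta$ small enough at the outset contradicts $\|y\|_2\geq 1-\delta$. Hence no such central sequence exists and $M'\cap M^{\om}=\cz$. I expect the degree-uniform lower bound for the difference of left and right $q$-creation operators on the Fock space with coefficients to be the main obstacle, as it must hold simultaneously on all chaos levels and is the exact point at which the lower bound on $\dim H$ is used.
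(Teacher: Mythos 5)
Your reduction to a single element $y$ of bounded degree is fine: the concentration estimate plus the finite dimensionality of $\bigoplus_{s\leq k}L^2_s(M)$ (which the paper also uses, via Prop.~3.20 of the prequel) does yield $y\in M$ with $\tau(y)=0$, $\|y\|_2\geq 1-\delta$ and $\|uy-yu\|_2\leq 2\delta$ for all $u\in\U(M)$. The genuine gap is everything after that. The ``degree-uniform Poincar\'e inequality'' $\sum_j\|[G(\xi_j),\zeta]\|_2^2\geq c\|\zeta\|_2^2$ for all $\zeta\perp B$ is not proved, only announced, and you yourself flag it as the main obstacle. It is not a routine verification: the commutator of a generator with a degree-$s$ element has components in degrees $s-1$, $s$ and $s+1$, so for a mixed-degree $\zeta$ the creation parts coming from degree $s$ interfere with the annihilation parts coming from degree $s+2$, and one cannot simply bound below chaos by chaos. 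Such a uniform gap on all of $L^2(M)\ominus\cz$ is essentially equivalent to the non-$\Gamma$ statement you are trying to prove, so as written the argument is circular in spirit and incomplete in substance.

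The irony is that from your own intermediate object $y$ the proof closes in one line without any spectral gap. Since $M$ is a II$_1$ factor, Dixmier averaging puts $\tau(y)1=0$ in the norm-closed convex hull of $\{uyu^*:u\in\U(M)\}$, and every element of that hull is within $2\delta$ of $y$ in $\|\cdot\|_2$; hence $\|y\|_2\leq 2\delta$, contradicting $\|y\|_2\geq 1-\delta$ for $\delta<1/3$. The paper takes an equally elementary but slightly different route: it first observes that any trace-zero central sequence satisfies $E_M((x_n))=0$ in $M^{\om}$ (because $E_M((x_n))$ is central in the factor $M$, hence the scalar $\lim\tau(x_n)=0$), so $\lim_{n\to\om}\tau(\xi^*x_n)=0$ for every fixed $\xi\in M$; expanding $P_{\leq k}(x_n)$ in a fixed finite orthonormal basis of $\bigoplus_{s\leq k}L^2_s(M)$ then forces $\langle x_n,P_{\leq k}(x_n)\rangle\to 0$, contradicting $\langle x_n,P_{\leq k}(x_n)\rangle\geq 1-\eps$. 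Either of these replaces your entire second half; the lower bound $\dim H\geq d(q)$ is not needed at this stage of the argument at all (it enters earlier, through factoriality and non-amenability of $M$), so a correct proof should not be looking for a place to use it here.
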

\begin{proof} We use the spectral gap principle of Popa (see \cite{PopaOPF, PopaSMS}). Let $\tilde M=\Gamma_q(B,S\ten(H \oplus H))$ and for every $m \geq 1$ let $\F_m \subset L^2(\tilde M)$ be the $M-M$ bimodule introduced in \cite{JungeUdreaGQC}, Sections 6 and 7. Namely, $\F_m$ is the closed linear span of reduced Wick words $W_{\si}(x_1,\ldots,x_s,h_1,\ldots,h_t) \in \tilde M$ such that $h_i \in H\oplus \{0\} \cup \{0\} \oplus H$ and at least $m$ of them are in $\{0\}\oplus H$. Also let $(\al_t)$ be the 1-parameter group of *-automorphisms of $\tilde M$ introduced in \cite{JungeUdreaGQC}, Thm. 3.16. Let's note the following transversality property, due to Avsec (see \cite{Avsec}, Prop. 5.1). 
\begin{lemma} There exists a constant $C_m>0$ such that for $0<t<2^{-m-1}$ we have
\[\|\al_{t^{m+1}}(\xi)-\xi\|_2 \leq C_m \|P_{\F_m}\al_t (\xi)\|_2 \quad \mbox{for all} \quad \xi \in \bigoplus_{k \geq m+1}L^2_k(M)\subset L^2(\tilde M).\]
\end{lemma}
As noted in Section 6 of \cite{JungeUdreaGQC}, since $B$ is amenable, there exists an $m \geq 1$ such that $\F_m$ is weakly contained into the coarse bimodule $L^2(M) \ten L^2(M)$. Fix such an $m$. Since $M$ is a non-amenable factor, it follows that $L^2(M)$ is not weakly contained in $\F_m$. This means that for every $\delta>0$ there exist a finite set $F \subset \U(M)$ and an $\eps>0$ such that if $\xi \in \F_m$ satisfies $\|u\xi-\xi u\|_2 \leq \eps$ for all $u \in F$, then $\|\xi\|_2 \leq \delta$. Fix such a $\delta$, set $\delta'=\frac{\delta}{2C_m+1}$ and take $\eps$ and $F$ corresponding to $\delta'$. Take $(x_n) \in M' \cap M^{\om}$. There's no loss of generality in assuming that $\|x_n\|_{\infty} \leq 1$ for all $n$. Fix $0<t<2^{-m-1}$ such that $\|\al_t(u)-u\|_2\leq \frac{\eps}{4}$ for all $u \in F$ and $\|\al_t(\xi)-\xi\|_2 \leq \delta'$ for all $\xi \in \bigoplus_{k \leq m}L^2_k(M)$ with $\|\xi\|_2 \leq 1$. For all $n \in \nz$ we have
\begin{align*}
& \|u\al_t(x_n)-\al_t(x_n)u\|_2=\|[\al_t(x_n),u]\|_2 = \|[x_n,\al_{-t}(u)]\|_2 \leq \|[x_n,\al_{-t}(u)-u]\|_2 + \|[x_n,u]\|_2 \\
& \leq 2\|\al_{-t}(u)-u\|_2 + \|[x_n,u]\|_2 \leq \frac{\eps}{2} + \|[x_n,u]\|_2.
\end{align*} 
Since $(x_n)\in M'\cap M^{\om}$ we see that for $n$ large enough and for all $u \in F$ we have
\[\|\al_t(x_n)u-u\al_t(x_n)\|_2 \leq \eps,\]
which further implies $\|P_{\F_m}\al_t(x_n)\|_2 \leq \delta'$. Write $x_n=x_n'+x_n''$, where $x_n' \in \bigoplus_{k \leq m}L^2_k(M)$ and $x_n'' \in \bigoplus_{k \geq m+1}L^2_k(M)$. Note that $\|x_n'\|_2 \leq 1$, $\|x_n''\|_2 \leq 1$. Due to our choice of $t$ we see that, for $n$ large enough,
\[\|P_{\F_m}\al_t(x_n')\|_2 \leq \|P_{\F_m}(\al_t(x_n')-x_n')\|_2 + \|P_{\F_m}(x_n')\|_2 \leq  \delta'.\]
Using Avsec's transversality property, this further implies, for $n$ large enough,
\[\delta' \geq \|P_{\F_m}\al_t(x_n)\|_2 \geq \|P_{\F_m}\al_t(x_n'')\|_2-\|P_{\F_m}\al_t(x_n')\|_2 \geq \|P_{\F_m}\al_t(x_n'')\|_2-\delta', \]
hence 
\[2\delta' C_m\geq C_m \|P_{\F_m}\al_t(x_n'')\|_2 \geq \|\al_{t^{m+1}}(x_n'')-x_n''\|_2.\]
Thus, for $0< s < t, t^{m+1}$ and $n$ large enough we have
\[\|\al_s(x_n)-x_n\|_2 \leq \|\al_s(x_n')-x_n'\|_2 +\|\al_s(x_n'')-x_n''\|_2 \leq (2C_m+1)\delta'.\]
Using \cite{JungeUdreaGQC}, Thm. 3.16, we see that there exists a $k=k(s,\delta)$ such that, for $n$ large enough,
\[\|x_n-P_{\leq k}(x_n)\|_2 \leq (2C_m+1)\delta'=\delta.\]
Taking the limit with respect to $n \to \om$ establishes the first statement. For the moreover part, assume first that $B=\cz$. Let's make the following general remark. Suppose $(M,\tau)$ is a type $II_1$ factor, $\om$ a free ultrafilter on $\nz$ and consider $M \subset M^{\om}$ embedded in the canonical way, i.e. as constant sequences.  Let $(x_n) \in M' \cap M^{\om}$ such that $\tau(x_n)=0$ for all $n$. Then for every $a\in M$ we have $\lim_{n \to \om}\tau(ax_n)=0$, i.e. $x_n \to 0$ ultraweakly as $n \to \om$. To prove this, let $E_M:M^{\om} \to M$ be the trace-preserving conditional expectation. Then $E_M((x_n))\in \cz$. Indeed, since $(x_n)$ is a central sequence, for every $a \in M$ we have
\[aE_M((x_n))=E_M(a(x_n))=E_M((x_n)a)=E_M((x_n))a.\]
Thus $E_M((x_n))$ is in the center of $M$, so there exists a scalar $\lambda$ such that $E_M((x_n))=\lambda 1$. Then $\lambda=\tau(E_M((x_n)))=\tau_{\om}((x_n))=\lim \tau(x_n)=0$. Hence $E_M((x_n))=0$ and for every $a,b \in M$ we have
\[\lim \tau(ax_nb)=\tau_{\om}((ax_nb))=\tau(E_M((ax_nb)))=\tau(aE_M((x_n))b)=0,\]
which proves the claim. Assume now that $(x_n) \in M'\cap M^{\om}$ such that $x_n \in\U(M)$ and $\tau(x_n)=0$ for all $n$. Fix $0<\eps<1$ and $k$ such that $\lim \|x_n-P_{\leq k}(x_n)\|_2 \leq \eps$, according to the first part of the proof. Since $D_s(S)$ is finitely generated over $B=\cz$  for every $s$, according to Prop. 3.20 in \cite{JungeUdreaGQC}, the space $\bigoplus_{s \leq k}L^2_s(M)$ is finite dimensional (over $\cz$). Choose an orthonormal basis $\{\xi_j\}_{1 \leq j \leq N(k)}$ of $\bigoplus_{s \leq k} L^2_s(M)$, then write $P_{\leq k}(x_n)=\sum_{j=1}^{N(k)} \lambda_j(n)\xi_j$, with $\lambda_j(n) \in \cz$. Note that $\sum_{j=1}^{N(k)}|\lambda_j(n)|^2=1$ for all $n$. For all $n$ large enough we have
\begin{align*}
& 1-\eps \leq |\lan x_n,P_{\leq k}(x_n) \ran|=|\lan x_n,\sum_{j=1}^{N(k)}\lambda_j(n)\xi_j \ran| \leq \sum_{j=1}^{N(k)} |\lambda_j(n)| |\lan x_n, \xi_j \ran|=\sum_{j=1}^{N(k)}|\lambda_j(n)||\tau(\xi_j^*x_n)| \to 0,
\end{align*}
which produces a contradiction. When $B$ is finite dimensional, the same argument applies since $\bigoplus_{s \leq k}L^2_s(M)$ is again finitely generated over $\cz$, and this finishes the proof.
\end{proof}
\begin{rem} The moreover statement in Thm. 2.1 can also be obtained as a consequence of \cite{OzawaSolid} and Cor. 7.5 in \cite{JungeUdreaGQC}. Indeed, due to Cor. 7.5 in \cite{JungeUdreaGQC}, the von Neumann algebras $M=\Gamma_q(B,S\ten H)$ are strongly solid under the assumptions of Thm. 2.1, hence they are also solid. Ozawa remarked in \cite{OzawaSolid}, based on a result of Popa, that a non-amenable solid factor is automatically non-$\Gamma$, which reproves the second statement of Thm. 2.1.
\end{rem}

\begin{cor} Let $-1<q<1$ be fixed. There exists $d=d(q)$ such that the following von Neumann algebras are non-$\Gamma$ factors as soon as $\infty>dim(H)\geq d$:
\begin{enumerate}
\item $\Gamma_q(H)$;
\item $B \bar{\ten} \Gamma_q(H)$, for $B$ a type $II_1$ non-$\Gamma$ factor;
\item $\Gamma_q(\cz,S \ten K)$ associated to the symmetric copies $(\pi_j,B=\cz,A=\Gamma_{q_0}(H),D=\Gamma_q(\ell^2 \ten H))$, where $-1<q_0<1$, the symmetric copies are given by $\pi_j(s_{q_0}(h))=s_q(e_j \ten h)$ ($(e_j)$ an orthonormal basis of $\ell^2$) and $K$ is a finite dimensional Hilbert space (see Example 4.4.1 in \cite{JungeUdreaGQC});
\item $\Gamma_q(\cz,S \ten H)$ associated to the symmetric copies $(\pi_j,B_0=\cz,A_0=L(\Si_{[0,1]}),D_0=L(\Si_{[0,\infty)}))$ and $S=\{1,u_{(01)}\}$; the symmetric copies are defined by $\pi_j(a)=u_{(1j)}au_{(1j)}, a \in A_0$, where $u_{\si}, \si \in \Si_{[0,\infty)}$ are the canonical generating unitaries for $D_0$ (see Example 4.4.2 in \cite{JungeUdreaGQC});
\item $\Gamma_q(\cz, S \ten H)$ associated to the symmetric copies $(\pi_j,\cz,A,D)$, where $D=\overline{\bigotimes}_{\nz}L(\zz)$ or $D=\ast_{\nz}L(\zz)$, the $j$-th copy of $L(\zz)$ is generated by the Haar unitary $u_j$, $A=\{u_1\}''$, the copies $\pi_j$ are defined by $\pi_j(u_1)=u_j$ and $S=\{1,u_1,u_1^*\}$ (see Example 4.4.3 in \cite{JungeUdreaGQC}).
\end{enumerate}
\end{cor}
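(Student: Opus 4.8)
The plan is to obtain the corollary as an application of Theorem~2.1, the only genuinely $q$-dependent ingredient being the threshold $d(q)$, which I take to be the dimension bound from \cite{JungeUdreaGQC} guaranteeing that the algebras in question are non-amenable factors. Recall that under the hypotheses of Theorem~2.1 the base $B$ is amenable and, for the ``moreover'' conclusion, finite dimensional; the two remaining structural hypotheses are $1\in S=S^{*}$ and the exponential bound $\dim_{B}(D_{k}(S))\le Cd^{k}$. Once these are verified and factoriality is known (hence non-amenability, since the hyperfinite $II_{1}$ factor is the unique amenable one and does have property $\Gamma$), the ``moreover'' part of Theorem~2.1 yields the absence of property $\Gamma$. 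So for each family I would first exhibit it as a generalized $q$-gaussian over an amenable finite-dimensional base, check the two structural hypotheses, and then feed in factoriality from \cite{JungeUdreaGQC}.

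For items (1), (3), (4) and (5) I would take $B=\cz$, which is trivially amenable and finite dimensional. In (1) one writes $\Gamma_{q}(H)=\Gamma_{q}(\cz,S\ten H)$ for the trivial copies with $A=\cz$ and $S=\{1\}$, so that $\dim_{\cz}(D_{k}(S))$ is bounded and the growth condition holds with $d=1$. In (3), (4) and (5) the symmetric copies are exactly those of Examples 4.4.1--4.4.3 in \cite{JungeUdreaGQC}; in each case $S$ is finite and self-adjoint with $1\in S$ (for (4) the transposition satisfies $u_{(01)}^{2}=1$, hence $u_{(01)}=u_{(01)}^{*}$; for (5) one has $u_{1}^{*}\in S$ by construction). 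Since $S$ is finite and $B=\cz$, the number of reduced words of length $k$ in the letters of $S$ is at most $|S|^{k}$, so $\dim_{\cz}(D_{k}(S))\le Cd^{k}$ with $d=|S|$; the precise identification of the $D_{k}(S)$ is the one recorded in \cite{JungeUdreaGQC}. With factoriality supplied by \cite{JungeUdreaGQC} for $\dim(H)\ge d(q)$ (resp.\ $\dim(K)\ge d(q)$ in (3)), Theorem~2.1 applies and gives non-$\Gamma$.

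Item (2) lies outside the immediate scope of Theorem~2.1, because a non-$\Gamma$ (i.e.\ full) $II_{1}$ factor $B$ is automatically non-amenable and hence cannot play the role of the amenable base. Here the plan is instead to combine item (1) with a tensor-stability argument. Writing $N=\Gamma_{q}(H)$, for $\dim(H)\ge d(q)$ the factor $N$ is full by (1), and $B$ is full by hypothesis. Recalling that fullness is equivalent to a spectral-gap property of the $M$--$M$ bimodule $L^{2}(M)\ominus\cz$ (\cite{PopaSMS}), one checks that this spectral gap passes to $B\,\bar{\ten}\,N$ by testing on the orthogonal decomposition $(L^{2}(B)\ominus\cz)\ten L^{2}(N)\oplus\cz\ten(L^{2}(N)\ominus\cz)$ with a finite set of unitaries drawn from $B$ and from $N$. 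Since $B\,\bar{\ten}\,N$ is a factor (being a tensor product of factors), this shows it is full, i.e.\ non-$\Gamma$.

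The main obstacle is not the deductions themselves but the quantitative input on which they rest: pinning down a single threshold $d(q)$ so that all five families are simultaneously non-amenable factors as soon as $\dim(H)\ge d(q)$. This is precisely the $q$-dependent factoriality and non-amenability analysis carried out in \cite{JungeUdreaGQC}, and taking $d(q)$ to be the maximum of the thresholds arising in (1)--(5) closes the argument. By comparison, verifying $1\in S=S^{*}$ and the exponential growth of $\dim_{\cz}(D_{k}(S))$ is routine once the copies are written down, and the tensor-stability step used for (2) is standard.
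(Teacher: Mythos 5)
Your argument is correct and, for items (1) and (3)--(5), is exactly the paper's: factoriality is quoted from \cite{JungeUdreaGQC} (Prop.\ 3.23 for (3)--(5)), the structural hypotheses $1\in S=S^*$ and $\dim_{\cz}(D_k(S))\leq Cd^k$ are routine for these finite $S$ over $B=\cz$, and the ``moreover'' part of Theorem 2.1 finishes. The one genuine divergence is item (2): the paper disposes of it in a single citation to Cor.\ 2.3 of \cite{Connes76} (fullness is preserved under tensor products of $II_1$ factors), whereas you reprove that fact by hand via the spectral-gap characterization of non-$\Gamma$ and the orthogonal decomposition $(L^2(B)\ominus\cz)\ten L^2(N)\oplus\cz\ten(L^2(N)\ominus\cz)$. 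Your sketch is sound -- each summand is preserved under conjugation by unitaries of $B$ and of $N$ respectively, and is a multiple of the corresponding standard bimodule, so the two spectral gaps combine -- but it amounts to rederiving Connes' result, so the self-containedness buys little here. Two smaller points: first, your realization of $\Gamma_q(H)$ as $\Gamma_q(\cz,S\ten H)$ with $A=\cz$ and $S=\{1\}$ is dubious (the generators $s_q(1\ten h)$ built from the identity do not reproduce the $q$-gaussian generators; one should instead use the realization from Example 4.4.1 of \cite{JungeUdreaGQC} with a one-dimensional first Hilbert space, as the paper implicitly does). Second, the paper adds the observation -- worth keeping -- that for (1) and (2) the dimension restriction $\dim(H)\geq d(q)$ is actually unnecessary, since $\Gamma_q(H)$ is a factor for $\dim(H)\geq 2$ by \cite{Ricard} and a non-amenable solid factor is automatically non-$\Gamma$ (Remark 2.3).
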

\begin{proof} The von Neumann algebras in (3), (4) and (5) are factors due to Prop. 3.23 in \cite{JungeUdreaGQC}. The second statement is a consequence of Cor. 2.3 in \cite{Connes76}, while the rest follow from Thm. 2.1. Let's remark that (1) has been first proved by Sniady (\cite{SniadyII}), and that for the examples in (1) and (2) the restriction on the dimension is not necessary, due to the fact that $\Gamma_q(H)$ is a factor for dim$(H)\geq 2$ (see \cite{Ricard}) and to Remark 2.3 above.
\end{proof}

\bibliographystyle{amsplain}

\bibliographystyle{amsplain}
\bibliography{thebibliography}
\end{document}